\date{\bf May 01, 2008}
\theoremstyle{plain}
\newtheorem{theorem}{Theorem}
\newtheorem{lemma}{Lemma}
\theoremstyle{remark}
\DeclareMathOperator{\Rset}{\mathbb R}
\begin{document}

\title[...boundary value problems via topological methods...]
{Existence results for quasilinear elliptic boundary value problems via topological methods}

\author[Q.-A. Ng\^{o}]{Qu\^{o}\hspace{-0.5ex}\llap{\raise 1ex\hbox{\'{}}}\hspace{0.5ex}c Anh Ng\^{o}}
\address[Q.-A. Ng\^{o}]{Department of Mathematics,\\
College of Science, Vi\^{e}t Nam National University\\
H\`{a} N\^{o}i, Vi\^{e}t Nam}
\email{\href{mailto: Q.-A. Ng\^{o} <bookworm\_vn@yahoo.com>}{bookworm\_vn@yahoo.com}}


\subjclass[2000]{35J20, 35J65, 47H10}

\keywords{Quasilinear; Elliptic; Boundary value problem; Dirichlet, Leray-Schauder principle, Fixed point}

\begin{abstract}
In this paper, existence and localization results of $C^1$-solutions to elliptic Dirichlet boundary value problems are established. The approach is based on the nonlinear alternative of Leray-Schauder.
\end{abstract}

\thanks{This paper was typeset using \AmS-\LaTeX}

\maketitle

\section{Introduction}

In this paper, we consider the boundary value problem
\begin{equation} \label{eq1}
\begin{gathered}
-\Delta_p u = f(x, u),\quad \mbox{in }\Omega,\\
 u=0,\quad \mbox{on }\partial\Omega,
 \end{gathered}
 \end{equation}
where $\Omega \subset \Rset^N$ ($N \geqq 1$) is a nonempty bounded open set with smooth boundary $\partial \Omega$ and $f : \Omega \times \Rset \to \Rset$ is a continuous function. We seek $C^1$-solutions, i.e. function $u \in C^1(\overline \Omega)$ which satisfy \eqref{eq1} in the sense of distributions. 

In recent years, many authors have studied the existence of solutions for problem \eqref{eq1} from several points of view and with different approaches (see,for example, \cite{A, AR, CCN, CTY}). For instance, Afrouzi and Rasouli \cite{AR} ensure the existence of solutions for special types of nonlinearities, by using the method of sub- and supersolutions. 

Existence and multiplicity results for problem \eqref{eq1} are also presented by Anello \cite{A}, where $f$ admits the decomposition $f = g + h$ with $g$ and $h$ two Carath\'{e}odory functions having no growth conditions with respect to the second variable. His approach is variational and mainly based on a critical point theorem by B. Ricceri. 

In \cite{CCN}, Castro, Cossio and Neuberger apply the minmax principle to obtain sign-changing solutions for superlinear and asymptotically linear Dirichlet problems. 

A novel variational approach is presented by Costa, Tehrani and Yang \cite{CTY} to the question of existence and multiplicity of positive solutions to problem \eqref{eq1}, where they consider both the sublinear and superlinear cases. Another useful method for the investigation of solutions to semilinear problems is based on the Leray-Schauder continuation principle, or equivalently, on Schaefer's fixed point theorem. For example, in \cite{GT} this method is used for solutions in H\"{o}lder spaces, while in \cite{OP}, solutions are found in Sobolev spaces.

In this paper, we present new existence and localization results for $C^1$-solutions to problem \eqref{eq1}, under suitable conditions on the nonlinearity $f$. No growth conditions of "subcritical exponent" type are required. Our approach is based on regularity results for the solutions of linear Dirichlet problems and again on the nonlinear alternative of Leray-Schauder (see \cite{DG, Z}). We also notice that our present paper is motivated by the paper \cite{MP} where the same results are obtained for semilinear elliptic boundary value problems.

Our approach is mainly based on the following well-known theorem.

\begin{theorem}\label{dl1}
Let $B[0, r]$ denote the closed ball in a Banach space $E$ with radius $r$, and let $T : B[0, r] \to E$ be a compact operator. Then either
\begin{enumerate}
  \item[(i)] the equation $\lambda Tu = u$ has a solution in $B[0, r]$ for $\lambda = 1$, or
  \item[(ii)] there exists an element $u \in E$ with $\| u \| = r$ satisfying $\lambda Tu = u$ for some $0 < \lambda < 1$.
\end{enumerate}
\end{theorem}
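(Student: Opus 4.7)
The plan is to argue by contrapositive: assume (i) fails, meaning $Tu \neq u$ for every $u \in B[0,r]$, and then produce a pair $(u,\lambda)$ witnessing (ii). The machinery I would use is the Schauder fixed point theorem applied not to $T$ itself (which may leave the ball) but to a suitably retracted version of $T$.

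First I would introduce the radial retraction $\rho : E \to B[0,r]$ defined by $\rho(v)=v$ if $\|v\|\le r$ and $\rho(v)=rv/\|v\|$ otherwise. This map is continuous (in fact $1$-Lipschitz on $E$), so the composition $S := \rho\circ T : B[0,r]\to B[0,r]$ inherits compactness from $T$: continuous images of relatively compact sets are relatively compact. Since $B[0,r]$ is a closed, bounded, convex subset of $E$ and $S$ is compact, Schauder's fixed point theorem yields some $u_0\in B[0,r]$ with $Su_0=u_0$, i.e.\ $\rho(Tu_0)=u_0$.

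Next I split into two cases according to where $Tu_0$ lies. If $\|Tu_0\|\le r$, then $\rho$ acts as the identity at $Tu_0$, so $u_0 = Tu_0$; this is precisely conclusion (i), contradicting our standing assumption. Hence we must be in the remaining case $\|Tu_0\|>r$, in which $u_0 = \rho(Tu_0) = (r/\|Tu_0\|)\,Tu_0$. Setting $\lambda := r/\|Tu_0\|$ gives $0<\lambda<1$ and $u_0=\lambda Tu_0$, while taking norms yields $\|u_0\|=r$. This is exactly conclusion (ii), completing the dichotomy.

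The conceptually interesting step is the second one, where Schauder's theorem is invoked; everything else is bookkeeping around the retraction. I do not anticipate a genuine obstacle, since Schauder's theorem for compact self-maps of a closed convex bounded set is standard and the radial retraction is well-behaved. The only point requiring a moment of care is verifying compactness of $S$, but this follows because $\rho$ is (uniformly) continuous and $T$ sends bounded sets into relatively compact ones, so $S(B[0,r]) \subseteq \rho(\overline{T(B[0,r])})$ is relatively compact.
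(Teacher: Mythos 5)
Your argument is correct, and it is the standard derivation of the Leray--Schauder alternative from Schauder's fixed point theorem. Note, however, that the paper itself offers no proof of this statement: Theorem \ref{dl1} is quoted as a known result (the nonlinear alternative of Leray--Schauder) with references to Dugundji--Granas and Zeidler, so there is no in-paper argument to compare against; what you have written is essentially the textbook proof that those references contain. The structure is sound: assuming (i) fails, the retracted map $S=\rho\circ T$ is a continuous map of $B[0,r]$ into itself with relatively compact image (since $S(B[0,r])\subseteq\rho\bigl(\overline{T(B[0,r])}\bigr)$, a continuous image of a compact set), Schauder gives a fixed point $u_0=\rho(Tu_0)$, and the case analysis on $\|Tu_0\|$ forces $\|Tu_0\|>r$, producing $\lambda=r/\|Tu_0\|\in(0,1)$, $\|u_0\|=r$, $u_0=\lambda Tu_0$, which is (ii). The only inaccuracy is your parenthetical claim that the radial retraction is $1$-Lipschitz on a general Banach space: in an arbitrary norm it is only Lipschitz with constant $2$ (the constant $1$ requires additional geometric structure, e.g.\ a Hilbert space). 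This does not affect the proof, since plain continuity of $\rho$ is all that the compactness argument and Schauder's theorem require.
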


It is worth noticing that contrary to most papers in the literature where the Leray-Schauder principle is used together with the a priori bounds technique, in the proof of our main result, Theorem \ref{dl2}, no a priori bounds of solutions of \eqref{eq3} are established. In addition, Theorem \ref{dl2} not only that guarantees the existence of a solution, but also gives information about its localization. This is derived from a very general growth condition, inequality \eqref{eq3}, which in particular contains both sublinear and superlinear cases without any restriction of exponent.

\section{Main results}

Here and in the sequel $E$ will denote the space 
\[
C_0 \left( {\overline \Omega  } \right) = \left\{ {u \in C\left( {\overline \Omega  } \right):u = 0  {\mbox{ on }} \partial \Omega } \right\}
\]
endowed with the sup-norm
\[
\left\| u \right\|_0  = \mathop {\sup }\limits_{x \in \overline \Omega  } \left| {u\left( x \right)} \right|.
\]
Also by $C^1_0 (\Omega)$ we mean the space $C^1(\overline \Omega)\cap C_0(\overline \Omega)$. We start with an existence and localization principle for \eqref{eq1}

\begin{theorem}\label{dl2}
Assume that there is a constant $r > 0$, independent of $\lambda > 0$, with
\begin{equation}\label{eq2}
\|u\|_0 \ne r,
\end{equation}
for any solution $u \in C^1_0(\overline \Omega)$ to
\begin{equation} \label{eq3}
\begin{gathered}
-\Delta_p u =\lambda f(x, u),\quad \mbox{in }\Omega,\\
 u=0,\quad \mbox{on }\partial\Omega,
 \end{gathered}
 \end{equation}
and for each $\lambda \in (0, 1)$. Then the boundary value problem \eqref{eq1} has at least one solution $u \in C^1_0(\overline \Omega)$ with $\|u\|_0 \leqq r$.
\end{theorem}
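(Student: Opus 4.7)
My plan is to recast \eqref{eq1} as a fixed point equation in $E = C_0(\overline\Omega)$ and then invoke Theorem~\ref{dl1}. For each $u \in E$ the composition $x \mapsto f(x, u(x))$ is continuous and bounded on $\overline\Omega$, so by the classical theory of the $p$-Laplacian (existence in $W^{1,p}_0(\Omega)$, uniqueness via strict monotonicity, and $C^{1,\alpha}$ regularity up to the boundary in the sense of Tolksdorf--Lieberman) the auxiliary Dirichlet problem
\[
-\Delta_p v = f(x, u) \text{ in } \Omega, \qquad v = 0 \text{ on } \partial\Omega
\]
admits a unique solution $v =: Tu \in C^1_0(\overline\Omega)$. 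Fixed points of $T$ are precisely the $C^1_0$-solutions of \eqref{eq1}.

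Next I would check that $T\colon E \to E$ is continuous and compact. If $\|u\|_0 \le M$, then $\|f(\cdot,u)\|_\infty$ is bounded by the continuity of $f$ on $\overline\Omega\times[-M,M]$, and the $C^{1,\alpha}$ a~priori estimates yield a uniform bound on $\|Tu\|_{C^{1,\alpha}(\overline\Omega)}$. Arzel\`a--Ascoli then gives that $T$ maps bounded sets of $E$ into relatively compact subsets of $E$. Continuity is analogous: if $u_n \to u$ in $E$, then $f(\cdot,u_n)\to f(\cdot,u)$ uniformly, the sequence $\{Tu_n\}$ is bounded in $C^{1,\alpha}(\overline\Omega)$ and hence precompact in $E$, and any cluster point must coincide with $Tu$ by uniqueness of the Dirichlet problem; this forces $Tu_n \to Tu$ in $E$.

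With $T$ compact on $B[0,r]$, Theorem~\ref{dl1} yields two alternatives. In case (i) we obtain a fixed point $u \in B[0,r]$, which is the desired solution of \eqref{eq1} with $\|u\|_0 \le r$. Suppose instead that (ii) holds: there exists $u \in E$ with $\|u\|_0 = r$ and $\lambda Tu = u$ for some $\lambda \in (0,1)$. Setting $v := u/\lambda = Tu$ and using the $(p-1)$-homogeneity identity $\Delta_p(u/\lambda) = \lambda^{-(p-1)}\Delta_p u$, this translates into
\[
-\Delta_p u = \lambda^{p-1} f(x, u) \text{ in } \Omega, \qquad u=0 \text{ on } \partial\Omega.
\]
Since $p > 1$, the map $\lambda \mapsto \lambda^{p-1}$ sends $(0,1)$ onto itself, so with $\mu := \lambda^{p-1}\in(0,1)$ the function $u$ is a $C^1_0$-solution of \eqref{eq3} satisfying $\|u\|_0 = r$, in direct contradiction to hypothesis \eqref{eq2}. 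Hence alternative (i) must occur.

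The principal analytic obstacle is the compactness/continuity of $T$, which hinges on the uniform $C^{1,\alpha}$-estimates for the $p$-Laplace Dirichlet problem with bounded right-hand side; once that standard tool is in place, the rest is abstract fixed point machinery. The only conceptual subtlety is that $-\Delta_p$ is not $1$-homogeneous, so the scalar $\lambda$ appearing in Theorem~\ref{dl1} must be reinterpreted through the exponent $\lambda^{p-1}$ when passing back to \eqref{eq3}, but since this change of parameter remains in $(0,1)$ the hypothesis still applies verbatim.
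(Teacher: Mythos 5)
Your proof is correct and follows essentially the same route as the paper: it applies the Leray--Schauder alternative (Theorem~\ref{dl1}) in $E=C_0(\overline\Omega)$ to the operator $Tu=(-\Delta_p)^{-1}f(\cdot,u(\cdot))$, whose compactness and continuity the paper takes from Lemma~\ref{bd1} and you obtain directly from the Tolksdorf--Lieberman $C^{1,\alpha}$ estimates plus Arzel\`a--Ascoli. You are in fact more careful than the paper at the one delicate point: since $-\Delta_p$ is $(p-1)$-homogeneous, the alternative (ii) relation $u=\lambda Tu$ corresponds to \eqref{eq3} with parameter $\lambda^{p-1}\in(0,1)$ rather than $\lambda$ itself, and your remark that $\lambda\mapsto\lambda^{p-1}$ maps $(0,1)$ onto $(0,1)$ is precisely the justification that the paper's terse assertion that ``condition (ii) is excluded by hypothesis'' leaves implicit.
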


In order to prove Theorem \ref{dl2}, we firstly recall a well-known property of the operator $-\Delta_p$.

\begin{lemma}[See \cite{AC}, Lemma 1.1]\label{bd1}
Let $\Omega \subset \Rset^N$ be a bounded domain of class $C^{1, \beta}$ for some $\beta \in (0, 1)$ and $g \in L^\infty( \Omega)$. Then the problem
\begin{equation} \label{eq4}
\begin{gathered}
\int_\Omega {| \nabla u|^{p-2} \nabla u \nabla \varphi dx} =\int_\Omega {g  \varphi dx} ,\quad \mbox{ for all }\varphi \in C_0^\infty(\Omega),\\
 u \in W_0^{1,p}(\Omega),\quad p>1,
 \end{gathered}
 \end{equation}
has a unique solution $u \in C^1_0(\overline \Omega )$. Moreover, if we define the operator $K: L^\infty(\Omega) \to C^1_0(\overline \Omega): g \mapsto u$ where $u$ is the unique solution of (6), then $K$ is continuous, compact and order-preserving.
\end{lemma}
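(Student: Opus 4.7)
The plan is to break Lemma \ref{bd1} into three pieces: existence and uniqueness of a weak solution to \eqref{eq4}, $C^1$ regularity up to $\partial\Omega$, and the continuity/compactness/order-preservation properties of the solution operator $K$.

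For existence and uniqueness, I would use the direct method of the calculus of variations. The energy functional
\[
J(u) = \frac{1}{p}\int_\Omega |\nabla u|^p \, dx - \int_\Omega g u \, dx, \qquad u \in W_0^{1,p}(\Omega),
\]
is strictly convex (since $\xi \mapsto |\xi|^p$ is strictly convex for $p>1$), weakly lower semi-continuous, and coercive on the reflexive space $W_0^{1,p}(\Omega)$ (using $g \in L^\infty(\Omega)$ together with Hölder and Poincaré). Hence $J$ attains a unique minimum, and its Euler-Lagrange identity is exactly the weak formulation in \eqref{eq4}; strict convexity simultaneously delivers uniqueness. A cleaner alternative for uniqueness is to subtract two weak formulations, test with $u_1 - u_2$, and invoke the strict monotonicity of the $p$-Laplace vector field $\xi \mapsto |\xi|^{p-2}\xi$.

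For the regularity statement $u \in C^1_0(\overline\Omega)$, the key ingredient is the global $C^{1,\alpha}$ estimate of Lieberman for degenerate quasilinear equations of $p$-Laplace type on $C^{1,\beta}$ domains with bounded right-hand side: it yields some $\alpha \in (0,1)$ and a constant $C$ depending only on $\Omega$, $p$, $N$, $\beta$ such that
\[
\|u\|_{C^{1,\alpha}(\overline\Omega)} \leqq C \, \Phi\bigl(\|g\|_{L^\infty(\Omega)}\bigr)
\]
for a continuous function $\Phi$. Combined with the homogeneous boundary condition this places $u$ in $C^1_0(\overline\Omega)$. This is the deepest step and the reason the lemma is invoked as a black box from \cite{AC}: interior $C^{1,\alpha}$ estimates (DiBenedetto, Tolksdorf) are classical, but pushing them to $\partial\Omega$ requires Lieberman-type barrier constructions and a delicate boundary-flattening argument which is substantially more technical than anything else in the proof.

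With the a priori estimate in hand, the three properties of $K$ follow in a unified way. Compactness: a bounded set in $L^\infty(\Omega)$ is mapped into a bounded set in $C^{1,\alpha}(\overline\Omega)$, hence precompact in $C^1(\overline\Omega)$ by Arzelà-Ascoli. Continuity: if $g_n \to g$ in $L^\infty$, the sequence $\{Kg_n\}$ is precompact in $C^1_0(\overline\Omega)$, every cluster point solves \eqref{eq4} with datum $g$ (the uniform convergence $\nabla u_n \to \nabla u$ lets one pass to the limit in the nonlinear term $|\nabla u_n|^{p-2}\nabla u_n$), and uniqueness then forces convergence of the full sequence. Order-preservation: given $g_1 \leqq g_2$ with solutions $u_1, u_2$, I would subtract the two weak formulations, test with the admissible $\varphi = (u_1 - u_2)^+ \in W_0^{1,p}(\Omega)$, and use the elementary inequality
\[
\bigl(|\xi|^{p-2}\xi - |\eta|^{p-2}\eta\bigr) \cdot (\xi - \eta) \geqq 0, \qquad \xi, \eta \in \Rset^N,
\]
to deduce $(u_1 - u_2)^+ \equiv 0$, i.e. $u_1 \leqq u_2$.
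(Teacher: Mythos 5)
The paper itself offers no proof of this lemma: it is imported verbatim as Lemma~1.1 of \cite{AC} (Azizieh--Cl\'ement) and used as a black box. Your proposal therefore cannot match ``the paper's approach''; what it does is reconstruct the standard proof, and it does so correctly. The decomposition into (i) existence/uniqueness by the direct method or by strict monotonicity of $\xi \mapsto |\xi|^{p-2}\xi$, (ii) global $C^{1,\alpha}(\overline\Omega)$ regularity via Lieberman (with DiBenedetto--Tolksdorf supplying the interior estimates), and (iii) Arzel\`a--Ascoli plus uniqueness-of-cluster-points for compactness and continuity of $K$ is exactly how this result is established in the literature, and you correctly identify the boundary regularity as the genuinely deep step that justifies citing it rather than proving it. Two small points deserve tightening. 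First, in the order-preservation step the inequality $\bigl(|\xi|^{p-2}\xi - |\eta|^{p-2}\eta\bigr)\cdot(\xi-\eta) \geqq 0$ alone is not enough: you need its strict form (equality only when $\xi = \eta$) to conclude from $\int_{\{u_1>u_2\}}\bigl(|\nabla u_1|^{p-2}\nabla u_1 - |\nabla u_2|^{p-2}\nabla u_2\bigr)\cdot\nabla(u_1-u_2)\,dx \leqq 0$ that $\nabla(u_1-u_2)^+ = 0$ a.e., and then you should say explicitly that a $W_0^{1,p}$ function with vanishing gradient is zero. Second, if you take the variational route to uniqueness you must still argue that every weak solution of \eqref{eq4} is a critical point of $J$ and that convexity makes critical points minimizers; your ``cleaner alternative'' via monotonicity avoids this and is the better choice. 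With those refinements the argument is complete.
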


\begin{proof}[PROOF OF THEOREM \ref{dl2}]
According to Lemma \ref{bd1}, the operator $ (-\Delta_p)^{-1}$ from $L^\infty (\Omega)$ to $C^1_0 (\overline \Omega)$ is well-defined, continuous, compact and order-preserving. We shall apply Theorem \ref{dl1} to $E = C_0(\overline \Omega)$ and to the operator $T : C_0(\overline \Omega) \to C_0(\overline \Omega)$, with $Tu = (-\Delta_p)^{-1}Fu$, where $F : C(\overline \Omega) \to C(\overline \Omega)$ is given by $(Fu) (x) = f(x, u (x))$. Notice that, On the other hand, it is clear that the fixed points of $T$ are the solutions of problem \eqref{eq1}. Now the conclusion follows from Theorem \ref{dl1} since condition (ii) is excluded by hypothesis.
\end{proof}

Theorem 2.1 immediately yields the following existence and localization result.

\begin{theorem}\label{dl3}
Assume that there exist nonnegative continuous functions $\alpha$, $\beta$ and a continuous nondecreasing function $\psi : \Rset_+ \to \Rset_+$ such that
\begin{equation}\label{eq5}
\left| {f\left( {x,u} \right)} \right| \leqq \alpha \left( x \right)\psi \left( {\left| u \right|} \right) + \beta \left( x \right), \quad \forall \left( {x,u} \right) \in \Omega  \times \Rset.
\end{equation}
Suppose in addition that there exists a real number $r>0$ such that
\begin{equation}\label{eq6}
r \geqq \left\| {\left( { - \Delta _p } \right)^{ - 1} \alpha } \right\|_0 \psi \left( r \right) + \left\| {\left( { - \Delta _p } \right)^{ - 1} \beta } \right\|_0 .
\end{equation}
Then the boundary value problem \eqref{eq1} has at least one solution in $C_0^1(\overline \Omega)$ with $\|u\|_0 \leqq r$.
\end{theorem}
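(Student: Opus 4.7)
The plan is to apply Theorem \ref{dl2} with the very same $r$ furnished by \eqref{eq6}, so I must verify condition \eqref{eq2}: every $u\in C^1_0(\overline\Omega)$ solving \eqref{eq3} for some $\lambda\in(0,1)$ must satisfy $\|u\|_0\ne r$. The target a priori estimate is
$$\|u\|_0 \;<\; \psi(\|u\|_0)\,\|(-\Delta_p)^{-1}\alpha\|_0 + \|(-\Delta_p)^{-1}\beta\|_0,$$
which together with \eqref{eq6} and the monotonicity of $\psi$ rules out $\|u\|_0=r$ (the latter would force $r<r$). Theorem \ref{dl2} then immediately produces the claimed $C^1_0$-solution of \eqref{eq1} with $\|u\|_0\le r$.

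To derive the estimate, I would fix such a $u$ and $\lambda$ and first use \eqref{eq5} pointwise, together with the monotonicity of $\psi$, to obtain
$$|\lambda f(x,u(x))| \;\le\; \lambda\bigl[\alpha(x)\psi(\|u\|_0)+\beta(x)\bigr].$$
Write $K=(-\Delta_p)^{-1}$. By Lemma \ref{bd1}, $K$ is order-preserving on $L^\infty(\Omega)$, and because $-\Delta_p$ is odd, so is $K$; applying these facts to the two-sided envelope $-\lambda[\alpha\psi(\|u\|_0)+\beta]\le\lambda f(\cdot,u)\le\lambda[\alpha\psi(\|u\|_0)+\beta]$ and to the identity $u=K(\lambda f(\cdot,u))$ yields the pointwise bound
$$|u(x)| \;\le\; K\bigl(\lambda[\alpha\psi(\|u\|_0)+\beta]\bigr)(x),\qquad x\in\overline\Omega.$$
Taking the sup in $x$ and reducing the right-hand side to $\lambda\bigl[\psi(\|u\|_0)\|K\alpha\|_0+\|K\beta\|_0\bigr]$ gives the strict bound above, the strictness coming from $\lambda<1$ (the trivial case $u\equiv 0$ gives $\|u\|_0=0\ne r$ at once).

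The principal obstacle is precisely that last reduction, from $\|K(\lambda[\alpha\psi(\|u\|_0)+\beta])\|_0$ to $\lambda\bigl[\psi(\|u\|_0)\|K\alpha\|_0+\|K\beta\|_0\bigr]$. In the semilinear case $p=2$, $K$ is a bounded linear positive operator and the inequality is immediate from linearity and positive homogeneity. For genuine quasilinear $p\ne 2$, $K$ is only positively $\tfrac{1}{p-1}$-homogeneous and is \emph{not} additive, so this step has to be replaced by a weak comparison principle argument: build a supersolution of $-\Delta_p w = \alpha\psi(\|u\|_0)+\beta$ by superposing the two individual solutions $K\alpha$ and $K\beta$, check the required norm bound on this supersolution, and then dominate $|u|$ by it. All the remaining ingredients (the pointwise growth \eqref{eq5}, the oddness and order-preservation of $K$, the hypothesis \eqref{eq6}, and the invocation of Theorem \ref{dl2}) are routine.
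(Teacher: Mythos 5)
Your overall route is the paper's own: suppose $u$ solves \eqref{eq3} for some $\lambda\in(0,1)$ with $\|u\|_0=r$, bound $|u|$ via the growth condition \eqref{eq5} and the order-preservation of $K=(-\Delta_p)^{-1}$, contradict \eqref{eq6}, and then invoke Theorem \ref{dl2}. Up to the pointwise bound $|u(x)|\le K\bigl(\lambda[\alpha\,\psi(\|u\|_0)+\beta]\bigr)(x)$ your argument is fine, and indeed more carefully justified than the printed proof, which writes $u=\lambda K(Fu)$ and distributes $K$ over the sum as if it were linear. The genuine gap is exactly at the step you flag, and your proposed repair does not close it. First, the scalars come out with the wrong power: $K$ is positively $\tfrac{1}{p-1}$-homogeneous, so $K(\alpha\psi(r))=\psi(r)^{1/(p-1)}K\alpha$, which is dominated by $\psi(r)K\alpha$ only when $\psi(r)\ge 1$ and $p\ge 2$ (for the factor $\lambda$ the exponent is harmless, since $\lambda^{1/(p-1)}<1$ as well, but for $\psi(r)$ it is not). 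Second, and more seriously, the superposition step is unjustified and false in general: if $-\Delta_p w_1=\alpha\psi(r)$ and $-\Delta_p w_2=\beta$, the sum $w_1+w_2$ need not be a weak supersolution of $-\Delta_p w=\alpha\psi(r)+\beta$, because $-\Delta_p$ is not superadditive. Already in one dimension, with $\phi(t)=|t|^{p-2}t$, the desired inequality amounts to $\phi(w_1'+w_2')-\phi(w_1')-\phi(w_2')$ being nonincreasing, and this fails on regions where $w_1'$ and $w_2'$ have opposite signs of comparable size (e.g.\ between the peaks of two separated bumps). So the asserted supersolution property is precisely the nonlinear difficulty restated, not a resolution of it.

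It is worth saying that the paper's own proof performs the same reduction $\|K(\alpha\psi(r)+\beta)\|_0\le\psi(r)\|K\alpha\|_0+\|K\beta\|_0$ silently, so as written it is rigorous only in the semilinear case $p=2$ (the setting of \cite{MP}, which it follows). A clean way to make both the statement and your argument correct for general $p>1$ is to replace \eqref{eq6} by the hypothesis $r\ge\bigl\|(-\Delta_p)^{-1}\bigl(\alpha\,\psi(r)+\beta\bigr)\bigr\|_0$: your pointwise bound together with order-preservation and homogeneity then gives, for any solution of \eqref{eq3} with $\|u\|_0=r$, the estimate $\|u\|_0\le\lambda^{1/(p-1)}\bigl\|K\bigl(\alpha\psi(r)+\beta\bigr)\bigr\|_0<r$, the contradiction needed for \eqref{eq2}, and Theorem \ref{dl2} finishes the proof.
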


\begin{proof}
In order to apply Theorem \ref{dl2}, we have to show that condition \eqref{eq2} holds true for all solutions to \eqref{eq3}. Assume $u$ is any solution of \eqref{eq3} for some $\lambda \in (0,1)$ with $\|u\|_0=r$. Then
\[
u = \lambda Tu = \lambda \left( { - \Delta _p } \right)^{ - 1} Fu.
\]
Futhermore, for all $x \in \overline \Omega$, we have
\begin{align*}
  \left| {u\left( x \right)} \right| &= \lambda \left| {\left( { - \Delta _p } \right)^{ - 1} Fu\left( x \right)} \right| \hfill \\
 &  \leqq \lambda \left| {\left( { - \Delta _p } \right)^{ - 1} \left( {\alpha \left( x \right)\psi \left( {\left| {u\left( x \right)} \right|} \right) + \beta \left( x \right)} \right)} \right| \hfill \\
 &  \leqq \lambda \left( {\left\| {\left( { - \Delta _p } \right)^{ - 1} \alpha } \right\|_0 \psi \left( {\left\| u \right\|_0 } \right) + \left\| {\left( { - \Delta _p } \right)^{ - 1} \beta } \right\|_0 } \right) \hfill \\
 &  \leqq \lambda \left( {\left\| {\left( { - \Delta _p } \right)^{ - 1} \alpha } \right\|_0 \psi \left( r \right) + \left\| {\left( { - \Delta _p } \right)^{ - 1} \beta } \right\|_0 } \right).
\end{align*} 
Taking the supermum in the above inequality, we obtain
\[
\left\| u \right\|_0  \leqq \lambda \left( {\left\| {\left( { - \Delta _p } \right)^{ - 1} \alpha } \right\|_0 \psi \left( r \right) + \left\| {\left( { - \Delta _p } \right)^{ - 1} \beta } \right\|_0 } \right).
\]
Therefore $r \leqq \lambda r < r$ since $\lambda \in (0,1)$ and $\|u\|_0 \leqq r$. This is a contradiction.
\end{proof}

\section*{Acknowledgments}

This manuscript has NOT been SUBMITTED/ACCEPTED/REJECTED for publication before. This is the FIRST submission.

\end{document}